\newtheorem{theorem}{Theorem}
\newtheorem{lemma}{Lemma}
\theoremstyle{remark}
\title{A note on the abelian sandpile in $\mathbbm{Z}^d$}
\author{Mykhaylo Tyomkyn}
\begin{document}

\maketitle

\begin{abstract}
We analyse the abelian sandpile model on $\mathbbm{Z}^d$ for the starting configuration of $n$ particles in the origin and $2d-2$ particles otherwise. We give a new short proof of the theorem of Fey, Levine and Peres \cite{FLP} that the radius of the toppled cluster of this configuration is $O(n^{1/d})$. 
\end{abstract}

Consider the grid $\mathbbm{Z}^2$ with an integer written inside each cell. We call this a \emph{configuration} and think of the numbers as particles. We want to study a one player game with the following rule: whenever we see at least $4$ particles in a cell $(x,y)$, we are allowed to \emph{topple} this cell, i.e.\ to remove $4$ particles from $(x,y)$ and add one particle to each of its neighbour cells $(x, y-1)$, $(x, y+1)$, $(x-1, y)$, $(x+1, y)$. The order in which we topple \emph{active} cells, that is cells with at least $4$ particles, is subject to our choice. Once no cell contains more than $3$ particles, the procedure stops. Cells with less than $4$ particles are called \emph{stable} and the whole procedure is called \emph{stabilisation}.

What we just described is called \emph{the abelian sandpile model}. It is an example of a cellular automaton with a particularly simple `symmetric' transition rule. The sandpile model was first defined in a book for school teachers in 1975 by Engel \cite{Engel}, who called it the \emph{chip firing game}. It was re-discovered in 1989 by the physicists Bak, Tang and Wiesenfeld~\cite{BTW} as a model for gravitation (hence the current name). Later the sandpile model proved to be an extremely useful mathematical object, having connections, besides physics, to group theory, probability theory and partial differential equations. To get a feeling for the sandpile and to enjoy its beauty we refer the reader to the wonderful sandpile applet by Maslov~\cite{Maslov}. The reader is welcome (and urged!) to check the proofs using Maslov's applet. 

The sandpile model can be defined on a general graph by assigning particles to the vertices and toppling a vertex whenever the number of particles assigned to it exceeds the degree of the vertex. In this text, however, we deal only with $\mathbbm{Z}^d$, $d\geq 2$. We shall use $d=2$ for simplicity, however our results generalise easily to $d$ dimensions. We also, purely for aesthetic reasons, imagine the particles living in the cells rather than on the knots. Consequently, we distinguish one cell to be the origin of a cartesian coordinate system and assign coordinates to each cell. 

The word `abelian' stands for the so-called \emph{abelian property} of the sandpile model: if the stabilisation terminates for \emph{some} order of topplings, it terminates for \emph{any} order of topplings. Moreover, for any two orders of topplings that terminate the procedure, the toppled cells are the same up to a permutation and, consequently, the final configurations are identical. This allows us to speak of \emph{the final configuration} resulting from some initial configuration. The proof of this statement is fairly standard and can be found in many references, e.g.~\cite{Meester}.

In this note we consider the following starting configuration: the cell $(0,0)$ has $n$ particles for some $n\geq 4$, every other cell has $2$ particles (we call this \emph{ground level} $2$). It was shown by Fey-den Boer and Redig~\cite{FBR} in 2008 that this model terminates for any $n$. Define the \emph{toppled cluster} to be the set of cells that were toppled at least once during the stabilisation. Similarly, the \emph{visited cluster} are the cells that were either toppled or received a particle during the stabilisation. The toppled and the visited clusters are closely related --- it is easy to see that the latter is obtained by taking the former together with its outer boundary.

An important consequence of the abelian property is the so-called \emph{monotonicity} of the model, which means, adding more particles to the initial configuration can only increase the toppled, and therefore also the visited, cluster. This naturally looking statement needs to be proved rigorously, and this is done by choosing an appropriate order of topplings. `Ignore' the added particles and run the stabilisation as if they were non-existent. We obtain the `old' toppled cluster. Now add the new particles and stabilise the obtained configuration. The toppled cluster can only increase; that proves the monotonicity. 

Define the \emph{radius} of a set of cells $\mathscr{C}$ to be $\sup_{(x,y)\in \mathscr{C}}\max(|x|,|y|)+1$ if $\mathscr{C}$ is non-empty and $0$ if $\mathscr{C}$ is empty. We are interested in the shape and the radius of the toppled cluster for our family of starting configurations. The shape is determined by the following theorem, proved by Fey-den Boer and Redig~\cite{FBR}. 

\begin{theorem}\label{thm: FBR}
For every positive integer $n\ge 4$, the starting configuration with $n$ particles at the origin and ground level $2$ terminates, and the toppled cluster is a square centred at the origin.
\end{theorem}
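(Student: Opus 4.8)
The plan is to derive the shape of the cluster from a single \emph{local} property of the stabilised configuration. Termination is already known (\cite{FBR}, and as recalled above), so write $\eta_0$ for the initial configuration ($n$ at the origin, ground level $2$), let $u\colon\mathbbm{Z}^2\to\mathbbm{Z}_{\ge 0}$ be the odometer (the number of times each cell is toppled), and put $\eta=\eta_0+\Delta u$ for the final configuration, where $\Delta u(x)=\sum_{y\sim x}u(y)-4u(x)$. Then $u$ has finite support, $0\le\eta\le 3$ everywhere, and the toppled cluster $\mathscr{T}=\{x:u(x)>0\}$ is finite and contains the origin (as $n\ge 4$). It suffices to show $\mathscr{T}=\{-R,\dots,R\}^2$, where $R=\max_{x\in\mathscr{T}}\max(|x_1|,|x_2|)$.

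Two standard remarks prepare the ground. First, $\mathscr{T}$ is connected: fix a legal toppling sequence and list the cells of $\mathscr{T}$ in order of first toppling, $0=z_1,z_2,\dots$; when $z_i$ with $i\ge 2$ is first toppled it is active, hence holds at least $4$ particles, while it started with only $\eta_0(z_i)=2$, so a neighbour of it had been toppled earlier, whence $z_i$ is adjacent to some $z_j$ with $j<i$ and every initial segment $\{z_1,\dots,z_i\}$ is connected. Second, $\mathscr{T}$ is invariant under the dihedral group $D_4$ of lattice symmetries fixing the origin: $\eta_0$ is $D_4$-invariant, so applying $\sigma\in D_4$ to a legal toppling sequence for $\eta_0$ yields another legal toppling sequence for $\eta_0$, and by the abelian property the multiset of toppled cells is unchanged, hence $\sigma(\mathscr{T})=\mathscr{T}$.

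The heart of the matter is the \emph{closure property}: every cell $x\notin\mathscr{T}$ has at most one neighbour in $\mathscr{T}$. Indeed such an $x$ has $u(x)=0$ and $x\ne 0$, so $\eta(x)=\eta_0(x)+\sum_{y\sim x}u(y)=2+\sum_{y\sim x}u(y)$; two neighbours of $x$ in $\mathscr{T}$ would contribute at least $2$ to the sum and force $\eta(x)\ge 4$, contradicting stability. Equivalently, if at least two neighbours of $x$ lie in $\mathscr{T}$ then $x\in\mathscr{T}$. Using this I would prove by induction on $r\ge 0$ that \emph{either $\mathscr{T}=\{-r,\dots,r\}^2$ or $\{-r-1,\dots,r+1\}^2\subseteq\mathscr{T}$}. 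We have $\{0\}\subseteq\mathscr{T}$; suppose $\{-r,\dots,r\}^2\subseteq\mathscr{T}$ but $\mathscr{T}$ is strictly larger. By connectedness some cell of $\mathscr{T}$ is adjacent to $\{-r,\dots,r\}^2$ yet lies outside it, hence at $\ell_\infty$-distance exactly $r+1$ from the origin; applying a symmetry we may take it to be $p=(r+1,a)$ with $0\le a\le r+1$. Repeated application of the closure property now fills the whole shell: from $p$ and the cell $(r,j)\in\{-r,\dots,r\}^2$ one gets $(r+1,j)$ for $j=a-1,\dots,0$ in turn, and then $(r+1,j)$ for $j=1,\dots,r$ in turn, so the face $\{(r+1,j):|j|\le r\}$ lies in $\mathscr{T}$; by $D_4$-symmetry all four faces of $\{-r-1,\dots,r+1\}^2$ lie in $\mathscr{T}$, and finally each corner, say $(r+1,r+1)$, has the two neighbours $(r+1,r)$ and $(r,r+1)$ in $\mathscr{T}$ and therefore lies in $\mathscr{T}$ as well. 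Thus $\{-r-1,\dots,r+1\}^2\subseteq\mathscr{T}$. Since $\mathscr{T}$ is finite this second alternative cannot recur forever, so $\mathscr{T}=\{-R,\dots,R\}^2$: a square centred at the origin, of radius $R+1$.

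The one place I expect to need care is the shell-filling inside the induction step: one must add the cells $(r+1,j)$ in an order in which each newcomer has two already-secured neighbours --- the just-added cell $(r+1,j\pm 1)$ together with $(r,j)$ coming from the inductive square --- and then dispose of the corner cells and of the small cases $r=0$ and $a\in\{0,r+1\}$ separately; this is routine but is the only spot where the geometry must be written out. Everything carries over word for word to $\mathbbm{Z}^d$ with ground level $2d-2$: the decisive feature is that $2d-2$ is exactly $2$ below the toppling threshold $2d$, which is what makes the closure property hold, and one concludes that the toppled cluster is a cube $\{-R,\dots,R\}^d$.
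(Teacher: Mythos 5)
Your proof is correct, but it is worth saying up front that the paper contains no proof of this theorem to compare against: Theorem~\ref{thm: FBR} is quoted from Fey-den Boer and Redig, and the only glimpse we get of their argument is the remark in the proof of Lemma~\ref{lem:sand2} that one can ``topple every cell of $S_{r_2}$ once'' by choosing an appropriate order --- i.e.\ their proof is dynamic, built on explicit toppling waves that sweep a square and are tracked step by step. Your route is entirely different and arguably cleaner: you read everything off the \emph{final stable} configuration. The key observation is that a never-toppled cell $x\neq 0$ ends with $2+\sum_{y\sim x}u(y)\leq 3$ particles, hence has at most one toppled neighbour; combined with connectedness of the cluster and its $D_4$-invariance, this closure property forces the cluster to swallow one full square shell at a time, and finiteness stops the process at a square. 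This buys you brevity, independence from any particular toppling schedule, and an argument that transfers verbatim to $\mathbbm{Z}^d$ with ground level $2d-2$ (the whole point being that the ground level sits exactly two below the toppling threshold). What it does not buy you is termination --- which you, like the paper, must import from \cite{FBR} --- nor the explicit wave schedules that this paper actually needs later in the proof of Lemma~\ref{lem:sand2}, so your argument could not replace the citation there. Two cosmetic repairs: a cell of $\mathscr{T}$ lying outside $\{-r,\dots,r\}^2$ but adjacent to it necessarily has the form $(r+1,a)$ with $|a|\leq r$ up to symmetry, so the case $a=r+1$ you set aside never occurs; and in the shell-filling step you should make explicit that the corners can only be treated \emph{after} all four faces are in place (each corner needs one neighbour from each of two adjacent faces), which is exactly why the $D_4$-symmetry step is load-bearing rather than a convenience.
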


The result we prove here concerns the radius of $\mathcal{T}_n$. The best previously known bounds of $\sqrt{n}\leq r \leq n/4$ for the radius of the cluster were essentially trivial. 
It was conjectured by Fey-den Boer and Redig~\cite{FBR} in 2008 and suggested by computer simulations that $\sqrt{n}$ is the right order of magnitude. This was proved by by Fey, Levine and Peres~\cite{FLP} in 2010. Our aim here is to give a much simpler proof.

\begin{theorem}\label{thm:sandpilethm}
The toppled cluster of the abelian sandpile for $n$ particles at the origin and ground level $2$ is a square, centred at the origin, of radius at most $4(1+o(1))\sqrt{n}$.
\end{theorem}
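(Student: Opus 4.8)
The plan is to deduce the bound from the \emph{abelian property}, in the form of a least--action comparison. Write $\eta_0$ for the starting configuration ($\eta_0(0,0)=n$ and $\eta_0\equiv 2$ elsewhere) and $\Delta f(x,y)=f(x-1,y)+f(x+1,y)+f(x,y-1)+f(x,y+1)-4f(x,y)$ for the discrete Laplacian, so that the final configuration is $\eta_\infty=\eta_0+\Delta u$, where $u$ is the \emph{odometer} recording how often each cell topples. The elementary fact I will use is: if $v\colon\mathbbm{Z}^2\to\mathbbm{Z}_{\ge 0}$ is any non--negative integer function with $\eta_0+\Delta v$ stable (that is, $\le 3$ everywhere), then $u\le v$ pointwise. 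Indeed, run any legal sequence of topplings and check inductively that the partial odometer never exceeds $v$: if a cell $c$ whose partial odometer already equalled $v(c)$ were about to be toppled, its particle count would be at most $(\eta_0+\Delta v)(c)\le 3<4$, a contradiction. Hence $\mathcal T_n=\{u\ge 1\}\subseteq\{v\ge 1\}$, and by Theorem~\ref{thm: FBR} the cluster is a centred square; so it is enough to produce such a $v$ whose support lies in a square of radius $4(1+o(1))\sqrt n$. The stability condition on $\eta_0+\Delta v$ reads $\Delta v\le 1$ off the origin and $\Delta v(0,0)\le 3-n$ at the origin --- so I am after a non--negative, compactly supported integer function that is ``almost superharmonic'' with a deep sink of strength $\sim n$ at the origin.

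The model for such a $v$ comes from the discrete fundamental solution. Let $\mathfrak a$ be the potential kernel of $\mathbbm{Z}^2$: the function with $\mathfrak a(0,0)=0$, $\mathfrak a\ge 0$, $\Delta\mathfrak a=4\,\delta_{(0,0)}$, and $\mathfrak a(x,y)=\tfrac{2}{\pi}\log\sqrt{x^2+y^2}+\kappa+O\big((x^2+y^2)^{-1}\big)$. Put
\[
w(x,y)=\tfrac14\big(x^2+y^2\big)-C\,\mathfrak a(x,y)+D,\qquad C=\big\lceil(n-2)/4\big\rceil,
\]
with $D$ chosen so that $\min w=0$. Since $\Delta\big(\tfrac14(x^2+y^2)\big)\equiv 1$ and $\Delta\mathfrak a$ is supported at the origin, $\Delta w\equiv 1$ away from the origin and $\Delta w(0,0)=1-4C\le 3-n$ --- exactly the two inequalities needed, with $\eta_0+\Delta w=3$ (stable) off the origin. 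The radial profile $w\sim\tfrac14 r^2-\tfrac{2C}{\pi}\log r$ is non--negative, tends to $+\infty$ as $r\to 0$ and as $r\to\infty$, and attains its minimum value $0$ on a circle of radius $r_\star=(1+o(1))\sqrt{(n-2)/\pi}$, near which $w$ vanishes to second order (so $w=O(1)$ on the shells just inside). Truncating $w$ to $0$ outside that circle yields, I claim, a valid super--solution: in the interior $\Delta v=1$; at the origin $\Delta v\le 3-n$; just inside the circle the truncation only deletes non--negative neighbour contributions, so $\Delta v\le 1$; and just outside the circle $v=0$ with inward neighbours of size $O(1)$, so again $\Delta v\le 1$. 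Its support has radius $r_\star=(1+o(1))\sqrt{(n-2)/\pi}$, well inside a square of radius $4(1+o(1))\sqrt n$; combined with Theorem~\ref{thm: FBR}, $\mathcal T_n$ is a centred square of radius at most $(1+o(1))\sqrt{(n-2)/\pi}$, in particular at most $4(1+o(1))\sqrt n$.

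The step I expect to be the real work is turning $w$ into a genuinely integer--valued super--solution without spoiling $\Delta v\le 1$: $w$ is irrational and has gradient of order $n/r$ throughout its support, so crude rounding can raise the Laplacian by as much as $4$ at individual cells. One must round carefully --- building $v$ shell by shell so that the rounding defect is controlled --- and separately verify the inequality on a bounded number of coordinate shells about the origin, where the asymptotics of $\mathfrak a$ are not yet accurate and one argues from exact values such as $\mathfrak a(1,0)=1$, $\mathfrak a(1,1)=4/\pi$, $\mathfrak a(2,0)=4-8/\pi$. These corrections are of lower order and can be absorbed into the $o(1)$; the constant $4$ is deliberately not optimised (the clean construction already gives radius $(1+o(1))\sqrt{(n-2)/\pi}$), and keeping it generous is precisely what lets the bookkeeping stay simple.
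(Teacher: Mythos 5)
Your overall strategy --- the least--action principle together with an explicit super-solution built from $\tfrac14(x^2+y^2)-C\,\mathfrak a$ --- is the natural potential-theoretic route (essentially the divisible-sandpile comparison used by Levine and Peres, and by Fey, Levine and Peres for \emph{low} backgrounds), and it is genuinely different from the paper's argument. But for ground level $2=2d-2$ it cannot be completed, because the containment you are trying to prove is false. A conservation count shows this: no particle ever leaves the visited cluster $V=\mathcal T_n\cup\partial\mathcal T_n$, so $n+2(|V|-1)\le 3|V|$, i.e.\ $|V|\ge n-2$. Since $\mathcal T_n$ is a square centred at the origin, its corner therefore lies at Euclidean distance $(1+o(1))\sqrt{n/2}\approx 0.707\sqrt n$ from the origin, strictly outside your disc of radius $r_\star=(1+o(1))\sqrt{(n-2)/\pi}\approx 0.564\sqrt n$. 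Hence no non-negative integer $v$ with $\eta_0+\Delta v$ stable can be supported in that disc --- by your own least-action principle its existence would force $\mathcal T_n$ into the disc. So the ``real work'' you defer (the rounding, and the verification at the boundary of the support) is not a technicality to be absorbed into the $o(1)$: it is impossible.

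The place where the construction actually breaks is the step you pass over with ``inward neighbours of size $O(1)$, so again $\Delta v\le 1$'': $O(1)$ is not $\le 1$. At ground level $2$ the stability constraint off the origin is $\Delta v\le 1$ \emph{exactly}, and your $w$ already satisfies $\Delta w=1$ with equality throughout the interior, so there is zero slack anywhere to absorb either the boundary defect or the rounding defect (a single cell with $\Delta v=2$ invalidates the least-action comparison). This is precisely what makes background $2d-2$ the critical, hard case, and it is why the paper takes a different, purely combinatorial route: stabilise $n-2$ particles on ground level $0$ first, where a packing argument inside the diamond gives radius $(1+o(1))\sqrt n$ (Lemma~\ref{lem:sand1}), and then add the two background particles one at a time, showing via Lemma~\ref{lem:sand2} and monotonicity that each addition at most doubles the cluster. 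To salvage your approach you would have to support $v$ on a square of circumradius at least $\sqrt{n/2}$ and rebuild the super-solution there (and it is not clear a valid integer one exists even then), or else abandon the pointwise comparison in favour of an argument, like the paper's, that exploits the abelian property directly.
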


The main difficulty in proving Theorem~\ref{thm:sandpilethm} lies in choosing the right approach. Trying to calculate the radius, let alone the number of particles in any given cell {\em exactly}, as an explicit function of $n$, seems to be a very difficult, if not a hopeless task. Even though the final configuration exhibits a certain pattern that recurs for most values of $n$ (see \cite{FLP, FBR}), at the moment of writing we are very far away from understanding it. Therefore, in order to prove Theorem~\ref{thm:sandpilethm} we had to find a method of estimating the radius asymptotically, without referring to the exact numbers of particles in the final configuration. The idea of combining the statements of Lemma~\ref{lem:sand1} and Lemma~\ref{lem:sand2} is harder to discover than it might seem at the first glance.

\begin{proof}
We choose the following order of topplings: first ignore $2$ particles everywhere and run the stabilisation with $n-2$ particles at the origin on ground level $0$. Lemma~\ref{lem:sand1} shows that the toppled cluster of this stabilisation has a radius of order $\sqrt{n}$. After that Lemma~\ref{lem:sand2} makes sure that adding one particle everywhere expands the toppled cluster by at most a factor of $2$. Adding a particle everywhere twice, therefore, can increase the cluster by at most a factor of $4$, which gives the desired bound.

\begin{lemma}\label{lem:sand1}
The radius of the toppled cluster of the abelian sandpile for $n$ particles at the origin and ground level $0$ is at most $(1+o(1))\sqrt{n}$.
\end{lemma}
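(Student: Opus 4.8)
The plan is to bound the odometer from above via the \emph{least action principle}. Write $\Delta f(p)=\sum_{q\sim p}f(q)-4f(p)$ for the discrete Laplacian, and let $\sigma$ denote the starting configuration ($n$ particles at the origin, $0$ elsewhere). The least action principle --- a standard consequence of the abelian property, in the same spirit as the monotonicity argument above --- states that if $w\colon\mathbbm Z^2\to\mathbbm R_{\ge 0}$ is \emph{any} nonnegative function for which $\sigma+\Delta w$ is a stable configuration (every cell has at most $3$ particles), then the stabilisation of $\sigma$ terminates and its odometer $u$ satisfies $u\le w$ pointwise. Since the toppled cluster is exactly $\{p:u(p)\ge1\}$, it lies in $\{p:w(p)\ge1\}\subseteq\operatorname{supp}w$. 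Hence it suffices to produce one nonnegative $w$ with $\sigma+\Delta w$ stable and $\operatorname{supp}w$ contained in a disc (hence a square) of radius $(1+o(1))\sqrt n$.

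The construction is guided by the continuum. Off the origin we need $\Delta w\le3$, while at the origin we need $\Delta w(0)\le3-n$, which forces $w$ to carry a logarithmic ``potential-kernel'' singularity at $0$. Taking $w$ harmonic with this singularity and truncating it to $0$ would create a boundary layer where $\Delta w\sim n/(\mathrm{radius})$, which is useless; the remedy is a quadratic term, using that $\Delta\bigl(\tfrac34(x^2+y^2)\bigr)=3$ identically. Concretely, for parameters $R$ and $C$ set
\[
w_0(p)=\tfrac34(x^2+y^2)-n\,a(p)+C,\qquad p=(x,y),
\]
where $a$ is the potential kernel of $\mathbbm Z^2$ (the nonnegative function with $a(0)=0$, $\Delta a=\mathbbm 1_{\{0\}}$, so that $a$ grows like $\tfrac1{2\pi}\log|p|$), and let $w=\max(w_0,0)$ inside the disc of radius $R$ and $w=0$ outside it. By construction $\Delta w_0=3-n\,\mathbbm 1_{\{0\}}$, so $\sigma+\Delta w_0\equiv3$ is maximally stable; choosing $C$ the least constant making $w_0\ge0$ on the disc forces $w_0$ to vanish near the radius $R_\ast=\tfrac1{\sqrt{3\pi}}\sqrt n$ where the quadratic and logarithmic gradients balance, and one takes $R=R_\ast+O(1)$. (A fully elementary variant replaces $a(p)$ by $\tfrac1{4\log2}\log(x^2+y^2+1)$, whose discrete Laplacian one checks is positive off the origin; the bookkeeping is identical, with a slightly larger but still sub-$\sqrt n$ radius.)

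The only real work is to verify, on the lattice, the inequalities $\Delta w(0)\le3-n$ and $\Delta w(p)\le3$ for $p\ne0$, together with $w\ge0$. I would split the plane into three regions. \emph{(i)} At and near the origin the singular term dominates and $\Delta w(0)=3-n$ exactly (or $\le3-n$ for the elementary variant), while at nearby cells $\Delta w=3-n\Delta a\le3$; here one uses only $\Delta a\ge0$ off $0$. \emph{(ii)} In the bulk of the disc $w=w_0$ and $\Delta w\equiv3$, with no discretisation error since both pieces have exactly computable discrete Laplacians. \emph{(iii)} Across the seam $|p|\approx R$: for a cell just inside, truncating its outside neighbours down to $0$ only \emph{decreases} $\Delta w$, so $\Delta w\le3$ survives; for a cell just outside, $\Delta w$ equals the sum of $w_0$ over its (at most two) inside neighbours, and the choice of $C$ and $R$ makes each such value $O(1)$, so this too is $\le3$ after an $O(1)$ adjustment of $R$. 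Region (iii) is the step I expect to need the most care: pinning $R$ down to $R_\ast+O(1)$ so that \emph{simultaneously} $w_0\ge0$ on the disc (which pushes $R$ up to $\ge R_\ast$) and the outer-seam Laplacian stays bounded (which pushes $R$ down to $\le R_\ast+O(1)$). With these three checks in hand, the least action principle gives $u\le w$, so the toppled cluster lies in a disc of radius $R_\ast+O(1)=(1+o(1))\tfrac1{\sqrt{3\pi}}\sqrt n\le(1+o(1))\sqrt n$, as required. In $d$ dimensions one replaces $a$ by $c_d|p|^{2-d}$ and $\tfrac34$ by $\tfrac{2d-1}{2d}$, with no other change.
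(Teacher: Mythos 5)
Your approach is the potential-theoretic one of Fey--Levine--Peres (least action principle plus an explicit odometer majorant), i.e.\ precisely the machinery this note is written to avoid; that by itself would be a legitimate alternative route, but the argument has a fatal gap. The least action principle in the form you use it --- ``if $w\ge 0$ is \emph{any} nonnegative function with $\sigma+\Delta w$ stable, then $u\le w$'' --- is false for real-valued $w$. Take $\sigma=4\delta_0$ and $w=0.9\,\delta_0$: then $\sigma+\Delta w$ equals $0.4$ at the origin, $0.9$ at its neighbours and $0$ elsewhere, so it is stable, yet the origin must topple once, so $u(0)=1>w(0)$. The principle holds only for \emph{integer-valued} $w$ (the standard proof looks at the first site whose toppling count would exceed $w$ and needs $u_t(x)\ge w(x)$ there, which uses integrality). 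Your $w$ is genuinely real-valued, and rounding it to an integer-valued function changes $\Delta w$ by as much as $\pm 2d=\pm 4$; since your construction has $\sigma+\Delta w_0\equiv 3$, the \emph{maximal} stable value, throughout the bulk of the disc, there is zero slack to absorb this, and the rounded function is no longer a valid majorant. The same zero-slack problem already makes the outer-seam estimate, which you rightly flag as delicate, exactly critical: two inside neighbours at height $\tfrac{3}{2}$ sum to exactly $3$, so every $O(1)$ error in the potential-kernel asymptotics (which is $\Theta(n/R_\ast^2)=\Theta(1)$ at the relevant radius and does not vanish) must be absorbed with no room to spare.

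This is not a repairable technicality at your constant, because the conclusion itself is too strong. A disc of radius $R_\ast=\sqrt{n/(3\pi)}$ has area $n/3+O(\sqrt n)$, so if the cluster were contained in it, conservation of the $n$ particles together with the bound of $3$ particles per stable cell would force all but $O(\sqrt n)$ cells of the final configuration to hold exactly $3$ particles. In fact $\sqrt{n/(3\pi)}$ is the known \emph{inner} bound (the cluster \emph{contains} a ball of volume $n/(2d-1)$), while the true final density is bounded away from $3$ --- which is exactly why the genuine Fey--Levine--Peres outer bound requires substantially more work and yields a larger constant. The paper's proof is entirely different and sidesteps all of this: it quotes the Fey-den~Boer--Redig fact that the toppled cluster contains the diamond $D_r$, observes that of any two adjacent toppled cells the one whose last toppling came first retains at least one particle, and hence finds at least one particle per $1\times 2$ domino, giving $n\ge(1+o(1))r^2$ directly. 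I would encourage you to look for an argument of that flavour rather than trying to salvage the test function.
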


\begin{proof}
Consider two neighbouring cells of the toppled cluster. Let us call them $a$ and $b$. Without loss of generality we may assume, that the last time $a$ or $b$ was toppled, it was $b$. Then, by the toppling rule, $a$ must contain at least $1$ particle in the final configuration. By a result of Fey-den Boer and Redig~\cite{FBR}, the proof of which we do not present here, the toppled cluster contains a \emph{diamond} of radius $r$, i.e. $D_r=\left\{(x,y): |x|+|y|\leq r-1 \right\}$. Subdivide $D_r$ into $1\times 2$-rectangles, each consisting of two cells; cells that may remain are negligible. We obtain $(1+o(1))r^2$ such $1\times 2$-rectangles. In the final configuration each of them must contain at least one particle. Since the ground level was $0$, all these particles must come from the origin. This shows that $n \geq (1+o(1))r^2$, and thus $r \leq (1+o(1))\sqrt{n}$.
\end{proof}

\begin{lemma}\label{lem:sand2}
The square $S_r$ filled with $4$ particles on ground level $2$ grows by at most a factor of $2$, i.e.\ the toppled cluster of its stabilisation has a radius of at most $2r$.
\end{lemma}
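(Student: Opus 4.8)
The plan is to produce an explicit \emph{barrier} and invoke the least action principle. Recall that, writing $\Delta w(x)=\sum_{y\sim x}w(y)-4w(x)$ for the discrete Laplacian on $\mathbbm{Z}^2$, the least action principle states: if $w\colon\mathbbm{Z}^2\to\mathbbm{Z}_{\ge 0}$ has finite support and $\sigma+\Delta w$ is stable (i.e.\ $\le 3$ everywhere), then the stabilisation of $\sigma$ terminates and its odometer is $\le w$ pointwise; in particular every toppled cell lies in the support of $w$. I would include its one‑paragraph proof: topple the cells of $\sigma$ in any legal order and induct on the number of topplings to show that the running odometer $o$ satisfies $o\le w$ throughout. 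In the inductive step, a cell $c$ about to be toppled has height $\sigma(c)+\Delta o(c)\ge 4$, while $\sigma(c)+\Delta w(c)\le 3$; subtracting and using $o\le w$ gives $-4(o(c)-w(c))\ge 1$, so $o(c)<w(c)$, hence $o(c)\le w(c)-1$ and the bound survives the topple. Since $\sum_c w(c)<\infty$, the procedure stops.

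The initial configuration of the lemma is $\sigma=2+2\cdot\mathbbm{1}_{S_r}$, so the requirement $\sigma+\Delta w\le 3$ is exactly that $\Delta w\le -1$ on $S_r$ and $\Delta w\le 1$ off $S_r$. I would look for $w$ of the form $w(x,y)=\Phi(\max(|x|,|y|))$ with $\Phi\colon\mathbbm{Z}_{\ge 0}\to\mathbbm{Z}_{\ge 0}$ piecewise quadratic: a strictly concave inner piece $\Phi(k)=r^2-\binom{k+1}{2}$ for $0\le k\le r$, glued at $k=r$ to a convex outer collar $\Phi(k)=\binom{2r-k}{2}$ for $r\le k\le 2r-1$, and $\Phi\equiv 0$ beyond. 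The inner piece has second difference $-1$ (that is what will give $\Delta w\le -1$ well inside $S_r$), the collar has second difference $+1$ and reaches $0$ precisely at $k=2r-1$, so $w$ is supported inside $S_{2r}$; the two pieces match, since $\Phi(r)=\binom{r}{2}$ from both formulas, and $\Phi$ is nonnegative, integer‑valued and decreasing.

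What remains is a finite case check of $\Delta w$. For a cell at $\max$‑distance $k\ge 1$ from the origin lying \emph{off} the diagonals $\{|x|=|y|\}$ (axis cells included) one computes $\Delta w=\Phi(k+1)-2\Phi(k)+\Phi(k-1)$; for a diagonal cell $\Delta w=2\bigl(\Phi(k+1)-\Phi(k)\bigr)$; and at the origin $\Delta w=4\bigl(\Phi(1)-\Phi(0)\bigr)$. Feeding in $\Phi$: on the inner region the first expression equals $-1$, the diagonal expression equals $-2(k+1)\le -4$, and the origin value is $-4$, all $\le -1$; on the collar the first expression equals $1$ and the others are $\le 0$; at the junction $k=r$ the first expression is again $1$ (it is the mixed second difference $\binom{r+1}{2}-2\binom{r}{2}+\binom{r-1}{2}$) and the diagonal value is $-2(r-1)\le 0$; and for $k\ge 2r-1$ everything vanishes. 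Hence $\sigma+\Delta w\le 3$, and the least action principle places the toppled cluster inside $\operatorname{supp}w\subseteq S_{2r}$, i.e.\ gives it radius at most $2r$.

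The one genuinely delicate point is the choice of $w$, and it is worth flagging where the obvious attempts break. A capped single quadratic fails because it reaches zero with too steep a slope, forcing $\Delta w>1$ at the rim of its support; and an additive barrier $u(x)+u(y)$, though it makes $\Delta w$ trivial to control via one‑dimensional estimates, is supported on an unbounded cross rather than a square and so confines nothing. The two‑piece radial $\Phi$ above is designed to dodge both traps: because the two quadratic pieces share the leading coefficient $-\tfrac12$, the mixed second difference at the gluing point comes out to exactly $1$, so the Laplacian bound survives the junction; and because the collar is a perfect square of the distance to the support boundary, it decelerates to slope $0$, so the last step onto $\partial S_{2r}$ costs only $\Delta w\le 1$. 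Getting these junction values right, together with the honestly two‑dimensional diagonal cells where a radial barrier behaves unlike the one‑dimensional picture, is where the (short) computation lives.
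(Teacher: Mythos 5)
Your proof is correct, but it takes a genuinely different route from the paper. The paper proves a stronger inductive statement (a square $S_{r_1}$ of $4$'s inside a square $S_{r_2}$ of $3$'s on ground level $2$ has toppled cluster inside $S_{r_1+r_2}$) by explicitly toppling the configuration layer by layer, peeling off one frame at a time and invoking monotonicity to compare the result with a smaller instance; the only tools are the abelian property and monotonicity, already set up in the introduction. You instead invoke the least action principle with an explicit barrier $w(x,y)=\Phi(\max(|x|,|y|))$, and I checked the arithmetic: the inner piece gives $\Delta w=-1$ off the diagonals and $\le -4$ on them and at the origin; the junction value at $k=r$ is the second difference $\binom{r-1}{2}-2\binom{r}{2}+\binom{r+1}{2}=1$ (using $r^2-\binom{r}{2}=\binom{r+1}{2}$), which is exactly the $\le 1$ allowed off $S_r$; the collar gives $+1$ off the diagonals and $\le 0$ on them, including the last step at $k=2r-1$ where $\Phi$ hits $0$; and your one-paragraph proof of the least action principle (the integrality step $o(c)\le w(c)-1$ from $-4(o(c)-w(c))\ge 1$) is sound. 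Since $\operatorname{supp}w\subseteq S_{2r-1}\subseteq S_{2r}$, the conclusion follows. What each approach buys: yours is arguably more robust --- the barrier transfers to $\mathbbm{Z}^d$ with only more cases in the finite check, and it proves termination as a byproduct rather than assuming the configuration stabilises --- but it imports the odometer/least-action machinery that is the engine of Fey, Levine and Peres \cite{FLP}, which is precisely the technology this note advertises avoiding; the paper's induction is longer to verify line by line but stays entirely within the elementary toppling-order arguments already established. One small point of care: your computation implicitly uses the paper's convention $S_r=\{(x,y):\max(|x|,|y|)\le r-1\}$ (so that the cells at $\max$-distance $r$ lie \emph{outside} $S_r$ and only need $\Delta w\le 1$ there); with the opposite convention the junction would sit one step too far in and the inner quadratic would need to be extended by one, so it is worth stating the convention explicitly.
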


\begin{proof}
More generally, we show that if $S_{r_1}$ is filled with 4's and $S_{r_2}\setminus S_{r_1}$ is filled with 3's on ground level $2$, then the toppled cluster of the stabilisation is contained in $S_{r_1+r_2}$. We do it by induction on $r_1$ for a fixed $r_1+r_2$. If $r_1=0$, then nothing is to prove. Now suppose, $r_1=k$ and we know that the statement is true for smaller values of $r_1$. Topple every cell of $S_{r_2}$ once; this is possible by choosing an appropriate order of topplings, as in the proof of Theorem~\ref{thm: FBR}. Analogously, the resulting configuration consists of $S_k$ full of 4's inside $S_{r_2-1}$ full of 3's, surrounded by a frame of 2's and 1's and further 3's outside. Repeat the procedure of toppling the square filled with 3's and 4's $r_2-k$ times, until we obtain $S_k$ full of 4's, surrounded by a frame of 1's and 2's and stable cells outside. Next, topple every cell in $S_k$ once. Note that so far the toppled cluster is contained in $S_{r_1+r_2}$. The obtained configuration contains $S_{k-1}$ full of 4's, all other cells being stable, and no cell outside $S_{r_2+1}$ has $3$ particles. Therefore, this configuration has fewer particles in each cell than the square of 4's of radius $k-1$ inside the square of 3's of radius $r_2+1$, whose toppled cluster is, by induction hypothesis, contained in $S_{r_1+r_2}$. Hence, by monotonicity, the same holds for the original configuration.
\end{proof}

Now we are ready to prove Theorem~\ref{thm:sandpilethm}. We start out with $n-2$ particles at the origin and $0$ everywhere else, ignoring $2$ particles in each cell; run the sandpile on this configuration. By Lemma~\ref{lem:sand1} we obtain a toppled cluster of radius at most $r_1=(1+o(1))\sqrt{n}$. Now we add one particle everywhere. The obtained configuration has fewer particles in each cell than the square of 4's of size $r_1$ on ground level $2$, therefore, by Lemma~\ref{lem:sand2} and monotonicity the toppled cluster grows by a factor of at most $2$. Now add the second particle everywhere. The obtained configuration has at most as many particles in each cell as the square of 4's of radius $2r_1$ on ground level $2$. Hence, by monotonicity, the toppled cluster grows again by a factor of at most $2$. The radius of the resulting final configuration is thereby at most $4r_1 = 4(1+o(1))\sqrt{n}$. 

\end{proof}

Note that Theorem~\ref{thm:sandpilethm} and its proof can be transferred to $d$ dimensions (for ground level $2d-2$), showing that in $\mathbbm{Z}^d$ the radius of the toppled cluster is at most $c_{d}n^{1/d}$, where $c_d$ is a constant depending solely on $d$.

\section*{Acknowledgements}
\label{sec:acknowledgements}

I would like to thank Béla Bollobás and Rob Morris for drawing this problem to my attention and for their help and advice.

\end{document}